\theoremstyle{definition}
\newtheorem{thm}{Theorem}
\newtheorem{lem}[thm]{Lemma}
\theoremstyle{definition}
\newtheorem{defn}{Definition}
\theoremstyle{definition}
\theoremstyle{definition}
\theoremstyle{definition}
\theoremstyle{definition}
\theoremstyle{definition}
\newcommand{\qqed}{\hfill $\square$}
\begin{document}

\thispagestyle{empty}

\begin{center}
{\huge  \textbf{Ice-Flower Systems And Star-graphic Lattices}}

\vskip 0.4cm

{\large Bing \textsc{Yao}$^{1}$, Hongyu \textsc{Wang}$^{2,\dag}$, Xia \textsc{Liu}$^{3,\ddagger}$, Xiaomin \textsc{Wang}$^2$, Fei \textsc{Ma}$^{2}$, Jing \textsc{Su}$^2$, Hui \textsc{Sun}$^{2}$}\\

\vskip 0.4cm
{1. College of Mathematics and Statistics, Northwest Normal University, Lanzhou, 730070 CHINA\\
2. School of Electronics Engineering and Computer Science, Peking University, Beijing, 100871, CHINA\\
3. School of Mathematics and Statistics, Beijing Institute of Technology, Beijing, 100081, CHINA\\
Corresponding authors: $^{\dag}$ Hongyu Wang: why1988jy@163.com; $^{\ddagger}$ Xia Liu: 1076641204@qq.com}
\end{center}

\vskip 0.6cm

\begin{abstract}
Lattice theory  has been believed to resist classical computers and quantum computers. Since there are connections between traditional lattices and graphic lattices, it is meaningful to research graphic lattices. We define the so-called ice-flower systems by our uncolored or colored leaf-splitting and leaf-coinciding operations. These ice-flower systems enable us to construct several star-graphic lattices. We use our star-graphic lattices to express some well-known results of graph theory and compute the number of elements of a particular star-graphic lattice. For more researching ice-flower systems and star-graphic lattices we propose Decomposition Number String Problem, finding strongly colored uniform ice-flower systems and connecting our star-graphic lattices with traditional lattices.\\

\textbf{Key words:} Lattices; graphical password; matrix; spanning tree; graph theory; coloring.
\end{abstract}

\vskip 1cm

\section{Introduction and preliminary }

\subsection{Investigation background}

There are many important classes of cryptographic systems beyond RSA and DSA and ECDSA, and they are believed to resist classical computers and quantum computers, such as Hash-based cryptography, Code-based cryptography, Lattice-based cryptography, Multivariate-quadratic-equations cryptography, Secret-key cryptography pointed in \cite{Bernstein-Buchmann-dahmen-quantum-2009}. Lattice theory has a wide range of applications in cryptanalysis and has been believed to resist classical computers and quantum computers.

For a given \emph{number string} $D$ shown in (\ref{eqa:0-example}), we cut it into 36 segments to produce a \emph{Topcode-matrix} $T_{code}(T)$ shown in Fig.\ref{fig:0-example} (a), and then we find a colored tree $T$ (called a \emph{Topsnut-gpw}) shown in Fig.\ref{fig:0-example} (b) by means of the Topcode-matrix $T_{code}(T)$, and we use the colored tree $T$ to make a \emph{topological vector} $V_{ec}(T)$ shown in Fig.\ref{fig:0-example} (c).
\begin{equation}\label{eqa:0-example}
{
\begin{split}
D=21262432252222746922221188132020151012172o201914162120182316
\end{split}}
\end{equation}

\textbf{Decomposition Number String Problem.} Cut a number string $D=c_1c_2\cdots c_n$ (like that in (\ref{eqa:0-example})) with $c_j\in [0,9]$ into $3\times q$ segments to form a Topcode-matrix $T_{code}$, and find a Topsnut-gpw $G$ having its own Topcode-matrix $T_{code}(G)=T_{code}$.

\vskip 0.2cm

\begin{figure}[h]
\centering
\includegraphics[width=16cm]{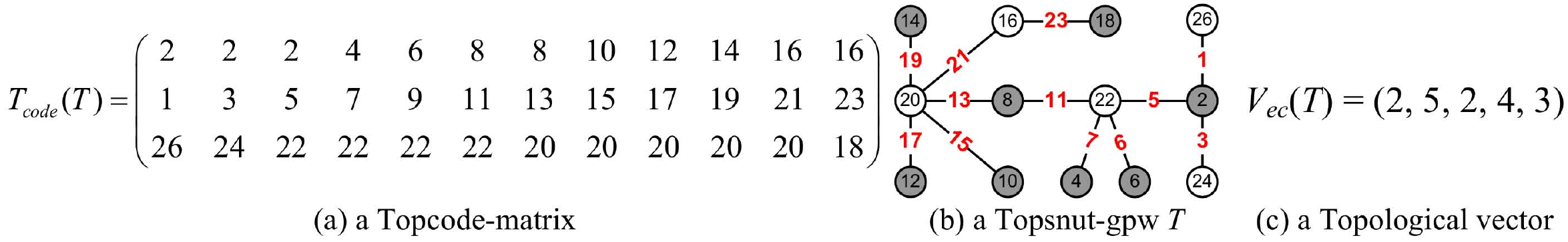}\\
\caption{\label{fig:0-example} {\small (a) A Topcode-matrix $T_{code}(T)$; (b) a colored tree $T$; (c) a topological vector $V_{ec}(T)$ of the tree $T$.}}
\end{figure}

By the topological vector $V_{ec}(T_k)$ of each caterpillar $T_k$, the authors in \cite{Wang-Yao-Star-type-Lattices-2020, Yao-Wang-Su-Sun-ITOEC2020} show a \emph{topological coding lattice} as
\begin{equation}\label{eqa:topological-caterpillar-lattice}
\textrm{\textbf{L}}(\textbf{T}) =\left \{\sum^{m}_{k=1}a_k V_{ec}(T_k): a_k\in Z^0, k\in [1,m]\right \}
\end{equation} with $\sum^{m}_{k=1} a_k\geq 1$. The topological coding lattice $\textrm{\textbf{L}}(\textbf{T})$ is a part of traditional lattices \cite{Bernstein-Buchmann-dahmen-quantum-2009, Wang-Xiao-Yun-Liu-2014}.

We try to apply techniques of topological coding to graphic lattices, and make deep connections between  traditional lattices and graphic lattices for considering difficult problems in traditional lattices, in which many were proven to be NP-hard.

Ice-flower systems are basic and important in star-graphic lattices, we will show several ice-flower systems in building up star-graphic lattices, in researching graph structures, in particular total colorings.

\subsection{Operations, terminology}

The book \cite{Bondy-2008} and the survey article \cite{Gallian2019} show standard notation and terminology that will be used in this article, and all graphs used here are non-directed and have no multiple edges. Nevertheless, we will use some particular graphs and sets: A $(p,q)$-graph is a graph having $p$ vertices (small circles shown in Fig.\ref{fig:0-example} (b) in black or white) and $q$ edges (line-segments connecting two vertices), and its a vertex $x$ has degree $\textrm{deg}_G(x)=|N(x)|$, where $N(x)$ is the set of neighbors of the vertex $x$, and $|X|$ is the number of elements of a set $X$. Especially, a vertex $x$ is called a \emph{leaf} if its degree $\textrm{deg}_G(x)=1$. For the purpose of simplicity, we denote a consecutive set $\{a,a+1,a+2,\dots, b\}$ with two integers $a,b$ subject to $0<a<b$ as $[a,b]$, and $[a,b]^o=\{a,a+2,\dots, b\}$ with odd integers $a,b$ with respect to $1\leq a<b$. Let $n_d(G)$ be the number of vertices of degree $d$ in a graph $G$.

We say a graph $G$ to admit a \emph{$W$-type labelling} $f:S\subset V(G)\cup E(G)\rightarrow [a,b]$ if $f(x)\neq f(y)$ for any pair of distinct vertices $x,y\in V(G)$, and $G$ to admit a \emph{$W$-type coloring} $g:S\subset V(G)\cup E(G)\rightarrow [a,b]$ if $g(u)=g(v)$ for some two distinct vertices $u,v\in V(G)$, and rewrite color set $f(S)=\{f(w):w\in S\}$ hereafter.

We restate the operations that will be used  as follows:
\begin{defn}\label{defn:Leaf-splitting-coinciding-operations}
\cite{Wang-Yao-Star-type-Lattices-2020} \emph{Leaf-splitting and leaf-coinciding operations.} Let $uv$ be an edge of a graph $G$ with $\textrm{deg}_G(u)\geq 2$ and $\textrm{deg}_G(v)\geq 2$. A \emph{leaf-splitting operation} is defined as: Remove the edge $uv$ from $G$, the resulting graph is denoted as $G-uv$. Add a new leaf $v'$, and join it with the vertex $u$ of $G-uv$ by a new edge $uv'$, and then add another new leaf $u'$ to join it with the vertex $v$ of $G-uv$ by another new edge $vu'$, the resultant graph is written as $G\wedge uv$, see Fig.\ref{fig:2-leaf-splt} from (a) to (b).

Conversely, if a graph $H$ has two leaf-edges $uv'$ and $u'v$ with $\textrm{deg}_H(u)\geq 2$ and $\textrm{deg}_H(v')=1$, $\textrm{deg}_H(u')=1$ and $\textrm{deg}_H(v)\geq 2$, we coincide two edges $uv'$ and $u'v$ into one edge $uv=uv'\overline{\ominus} u'v$ with $u=u\odot u'$ and $v=v\odot v'$. The resultant graph is denoted as $H(uv'\overline{\ominus} vu')$, and the process of obtaining $H(uv'\overline{\ominus} vu')$ is called a \emph{leaf-coinciding operation}, see Fig.\ref{fig:2-leaf-splt} from (b) to (a).

If a graph $H_1$ has a leaf-edge $uv'$ with $\textrm{deg}_{H_1}(v')=1$ and another graph $H_2$ has a leaf-edge $u'v$ with $\textrm{deg}_{H_2}(u')=1$, we do a leaf-coinciding operation to two leaf-edges $uv'\in E(H_1)$ and $u'v\in E(H_2)$, and the resultant graph is denoted as $H_1\overline{\ominus} H_2$.\qqed
\end{defn}

\begin{figure}[h]
\centering
\includegraphics[width=12cm]{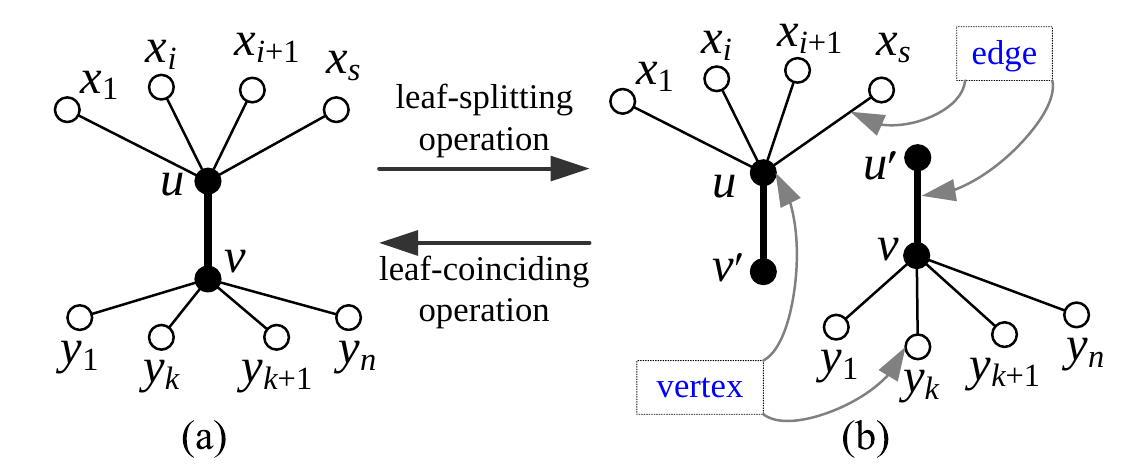}\\
\caption{\label{fig:2-leaf-splt} {\small A leaf-splitting operation is from (a) to (b); a leaf-coinciding operation is from (b) to (a).}}
\end{figure}

\begin{defn}\label{defn:colored-Leaf-splitting-coinciding-operations}
\emph{Colored leaf-splitting and leaf-coinciding operations.} Let $uv$ be an edge of a graph $G$ with $\textrm{deg}_G(u)\geq 2$ and $\textrm{deg}_G(v)\geq 2$, and let $G$ admit a proper total coloring $f$. We define a proper total coloring $g$ of the graph $G\wedge uv$ obtained by doing a leaf-splitting operation to the edge $uv$ of the graph $G$ as: $g(w)=f(w)$ for $w\in [V(G\wedge uv)\cup E(G\wedge uv)]\setminus \{u',v',uv',vu'\}$; $g(uv')=g(vu')=f(uv)$, $g(u)=f(u)$, $g(v)=f(v)$, $g(u')=f(u)$ and $g(v')=f(v)$. This process is called a \emph{colored leaf-splitting operation}, see a process from (a) to (b) in Fig.\ref{fig:2-leaf-splt-colored}.

The opposite operation of a colored leaf-splitting operation is called a \emph{colored leaf-coinciding operation} defined in the way: Let $u_iv_i$ with $i=1,2$ be two leaf-edges of a graph $L$ admitting a proper total coloring $\theta$, where $v_1$ and $v_2$ are leaves of $L$, and $\textrm{deg}_L(u_i)\geq 2$ with $i=1,2$. If $\theta(u_1)=\theta(v_2)$, $\theta(u_2)=\theta(v_1)$ and $\theta(u_1v_1)=\theta(u_2v_2)$, we can coincide two leaf-edges $u_1v_1$ and $u_2v_2$ into one edge $uv=u_1v_1\overline{\ominus} u_2v_2$ with $u=u_1\odot v_2$ and $v=u_2\odot v_1$, and the resultant graph is denoted as $L(u_1v_1\overline{\ominus} u_2v_2)$. Next, we define a proper total coloring $\psi$ for $L(u_1v_1\overline{\ominus} u_2v_2)$ as: $\psi(w)=\theta(w)$ for $w\in [V(L(u_1v_1\overline{\ominus} u_2v_2))\cup E(L(u_1v_1\overline{\ominus} u_2v_2))]\setminus \{u,v,uv\}$; $\psi(u)=\theta(u_1)=\theta(v_2)$, $\psi(v)=\theta(u_2)=\theta(v_1)$ and $\psi(uv)=\theta(u_1v_1)=\theta(u_2v_2)$, see Fig.\ref{fig:2-leaf-splt-colored} from (b) to (a). Particularly, if a colored graph $L_i$ has a colored leaf-edge $u_iv_i$ with $\textrm{deg}_{L_i}(v_i)=1$ with $i=1,2$, we do a colored leaf-coinciding operation to two colored leaf-edges $u_1v_1\in E(L_1)$ and $u_2v_2\in E(L_2)$, and the resultant graph is denoted as $L_1\overline{\ominus} L_2$.\qqed
\end{defn}

\begin{figure}[h]
\centering
\includegraphics[width=12cm]{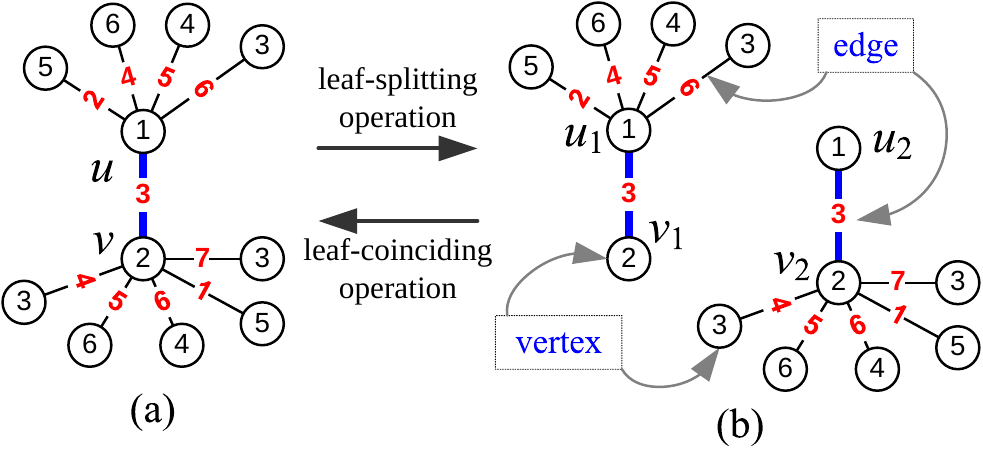}\\
\caption{\label{fig:2-leaf-splt-colored} {\small A colored leaf-splitting operation is from (a) to (b); a colored leaf-coinciding operation is from (b) to (a).}}
\end{figure}

\section{Ice-flower systems}
\textbf{Uncolored ice-flower system.} An \emph{ice-flower system} consists of stars $K_{1,m_1},K_{1,m_2},\dots, K_{1,m_n}$ and the leaf-coinciding operation. A \emph{star} $K_{1,m_i}$ with $m_i\geq 2$ is a complete bipartite connected graph with its vertex set $V(K_{1,m_i})=\{x_i,y_{i,j}:j\in [1,m_i]\}$ and its edge set $V(K_{1,m_i})=\{x_iy_{i,j}:j\in [1,m_i]\}$, so $K_{1,m_i}$ has $m_i$ leaves $y_{i,1},y_{i,2},\dots ,y_{i,m_i}$ and a unique non-leaf vertex $x_i$ with $\textrm{deg}_{K_{1,m_i}}(v)=m_i$. We call $n$ disjoint stars $K_{1,m_1},K_{1,m_2},\dots, K_{1,m_n}$ with $n\geq 1$ and $m_i\geq 2$ as an \emph{ice-flower system}, denoted as
\begin{equation}\label{eqa:ice-flower-base}
\textbf{\textrm{K}}=(K_{1,m_1},K_{1,m_2},\dots, K_{1,m_n})=(K_{1,m_j})^n_{j=1}
\end{equation}

The leaf-coinciding operation ``$K_{1,m_i}\overline{\ominus} K_{1,m_j}$'' on two stars $K_{1,m_i}$ and $K_{1,m_j}$ of an ice-flower system $\textbf{\textrm{K}}$ defined in (\ref{eqa:ice-flower-base}) is defined as doing a leaf-coinciding operation to an edge $x_iy_{i,a}$ of $K_{1,m_i}$ and an edge $x_jy_{j,b}$ of $K_{1,m_j}$, such that two edges $x_iy_{i,a}$ and $x_jy_{j,b}$ are coincided one edge $x_ix_j=x_iy_{i,a}\overline{\ominus} x_jy_{j,b}$, $x_i=x_i\odot y_{j,b}$ and $x_j=x_j\odot y_{i,a}$. Speaking simply, removing vertices $y_{i,a}$ and $y_{j,b}$ from $K_{1,m_i}$ and $K_{1,m_j}$ respectively, and adding a new edge, denoted as $x_ix_j$, joins $x_i$ with $x_j$ together. Thereby, an uncolored ice-flower system $\textbf{\textrm{K}}$ holds the leaf-coinciding operation ``$K_{1,m_i}\overline{\ominus} K_{1,m_j}$'' for any pair of stars $K_{1,m_i}$ and $K_{1,m_j}$, we call $\textbf{\textrm{K}}$ a \emph{strongly uncolored ice-flower system}.

\textbf{Colored ice-flower system.} If each star $K_{1,m_i}$ of an ice-flower system $\textbf{\textrm{K}}$ defined well in (\ref{eqa:ice-flower-base}) admits a $W_i$-type coloring $g_i$, we call $\textbf{\textrm{K}}$ a \emph{colored ice-flower system}, and rewrite it by \begin{equation}\label{eqa:colored-ice-flower-base}
\textbf{\textrm{K}}^c=(K^c_{1,m_1},K^c_{1,m_2},\dots, K^c_{1,m_n})=(K^c_{1,m_j})^n_{j=1}
\end{equation} for distinguishing. The colored leaf-coinciding operation ``$K_{1,m_i}\overline{\ominus} K_{1,m_j}$'' is defined in Definition \ref{defn:colored-Leaf-splitting-coinciding-operations}. If each pair of stars $K^c_{1,m_i}$ and $K^c_{1,m_j}$ holds the leaf-coinciding operation ``$K^c_{1,m_i}\overline{\ominus} K^c_{1,m_j}$'', we call $\textbf{\textrm{K}}^c$ a \emph{strongly colored ice-flower system} (see an example shown in Fig.\ref{fig:small-1}). Notice that a colored ice-flower system may be not strong, in general.

\section{Star-graphic lattices}

\subsection{Uncolored star-graphic lattices}By an ice-flower system $\textbf{\textrm{K}}$ defined in (\ref{eqa:ice-flower-base}), the following set
\begin{equation}\label{eqa:star-lattices}
\textbf{\textrm{L}}(\overline{\ominus} \textbf{\textrm{K}})=\left \{\overline{\ominus}|^n_{j=1}a_jK_{1,m_j}:~a_j\in Z^0\right \}
\end{equation}with $\sum ^n_{j=1}a_j\geq 1$ is called an \emph{uncolored star-graphic lattice}, so the ice-flower system $\textbf{\textrm{K}}$ defined in (\ref{eqa:ice-flower-base}) is called an \emph{uncolored star-graphic base}. It is easy to see that any uncolored connected graph $G$ can be expressed as $G=\overline{\ominus}|^n_{j=1}a_jK_{1,m_j}$ with $a_j\in Z^0$ and $\sum ^n_{j=1}a_j\geq 1$.

\begin{lem}\label{thm:ice-flower-system-any-graph}
Any uncolored connected graph $G$ can be expressed as $G=\overline{\ominus}|^n_{j=1}a_jK_{1,m_j}$ with $\sum ^n_{j=1}a_j\geq 1$ and $a_j\in Z^0$ by means of some ice-flower system $\textbf{\textrm{K}}$ defined in (\ref{eqa:ice-flower-base}) and the leaf-coinciding operation, where $\textrm{deg}_G(v_i)=d_i=m_i$ for $v_i\in V(G)$ and $\textrm{deg}_G(v_i)\geq 2$.
\end{lem}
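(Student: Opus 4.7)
The plan is a constructive induction on the number of \emph{internal edges} of $G$, namely edges $uv$ with $\textrm{deg}_G(u)\ge 2$ and $\textrm{deg}_G(v)\ge 2$, realised by running Definition~\ref{defn:Leaf-splitting-coinciding-operations} in reverse. I will dismantle $G$ by repeated leaf-splittings into a disjoint union of stars, then invert each splitting as a leaf-coinciding to produce the required expression.

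First I would record the key invariant: a leaf-splitting applied to an internal edge $uv$ preserves $\textrm{deg}_G(u)$ and $\textrm{deg}_G(v)$ and alters no other vertex degree, because the removed edge $uv$ is replaced by the two new leaf-edges $uv'$ and $vu'$. Listing the non-leaf vertices of $G$ as $v_1,\dots,v_s$ with $d_i=\textrm{deg}_G(v_i)\ge 2$, iterating the splitting over every internal edge of $G$ therefore produces a graph $G^*$ in which each $v_i$ still has degree $d_i$ but is now adjacent only to leaves (the old leaves of $G$ at $v_i$ plus one fresh leaf for every internal edge formerly at $v_i$). Hence $G^*$ is exactly the disjoint union of stars $K_{1,d_1},\dots,K_{1,d_s}$.

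Next I would gather equal degrees: letting $m_1,\dots,m_n$ be the distinct values among $d_1,\dots,d_s$ and $a_j=|\{i:d_i=m_j\}|$, this exhibits $G^*$ as the multiset $a_1K_{1,m_1},\dots,a_nK_{1,m_n}$ drawn from an ice-flower system $\textbf{\textrm{K}}$ of the form (\ref{eqa:ice-flower-base}). Reversing the sequence of splittings converts each one into a leaf-coinciding on the pair of leaf-edges $uv'$ and $vu'$ that it created, and the final output is $G$ itself; that is, $G=\overline{\ominus}|^n_{j=1}a_jK_{1,m_j}$. The inequality $\sum_j a_j\ge 1$ is automatic as soon as $G$ possesses at least one vertex of degree $\ge 2$, which is consistent with the constraint $m_j\ge 2$ built into the ice-flower system; the degenerate cases (a single vertex, or $K_2$) should simply be excluded or noted separately.

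The step most likely to cause friction is the bookkeeping for the leaf-coinciding operations: every identification performed during reconstruction must take place between two vertices that are still leaves of their respective current components, and the combinatorial matching of which leaf of the star at $v_i$ coincides with which leaf of the star at its neighbour must be recorded to recover the original adjacency pattern of $G$. Processing the edges during reconstruction in the reverse of the order used for splitting keeps matters well-defined, and this ordering remark is essentially the only point that warrants explicit mention beyond the degree-preservation invariant.
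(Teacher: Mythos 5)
Your proposal is correct and rests on the same core idea as the paper's proof: leaf-splitting every internal edge of $G$ decomposes it into stars $K_{1,d_i}$ centered at the non-leaf vertices, and the expression $G=\overline{\ominus}|^n_{j=1}a_jK_{1,m_j}$ is obtained by inverting these splittings as leaf-coincidings. The paper merely organizes the same decomposition as an induction on the number of non-leaf vertices (peeling off one star at a time) rather than splitting all internal edges simultaneously, so the difference is one of bookkeeping, not substance.
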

\begin{proof} Let $G$ be a uncolored connected graph, and let $L(G)$ be the set of leaves of $G$. If the graph $V(G-L(G))$ obtained by removing $L(G)$ from $G$ holds $|V(G-L(G))|=1$, so $G$ is a star, we are done. If $|V(G-L(G))|\geq 2$, we take arbitrarily a vertex $w\in V(G-L(G))$, so $\textrm{deg}_G(w)\geq 2$. Let $N(w)=\{u_1,u_2,\dots ,u_d\}$, where $d=\textrm{deg}_G(w)$. We do a series of leaf-coinciding operations to $G$, such that the resultant graph $G'$ has a star $K_{1,d}$ and another component $H$, where $K_{1,d}$ has its vertex set $\{w\}\cup N(w)$ and edge set $\{wu_i:i\in [1,d]\}$, and $H$ has its leaf-set $L(H)=L(G)\cup \{w_1,w_2,\dots ,w_d\}$ and $\{u_1w_1,u_2w_2,\dots ,u_dw_d\}\subset E(H)$.

Removing the star $K_{1,d}$ from $G'$, we get a connected graph $H=G'-V(K_{1,d})$. Because of $|V(H-L(H))|=|V(G-L(G))|-1$, so we have $H=\overline{\ominus}|^{n-1}_{j=1}a_jK_{1,m_j}$ with $\sum ^{n-1}_{j=1}a_j=|V(H-L(H))|$ and $a_j\in Z^0$. Immediately, $G=\overline{\ominus}|^{n}_{j=1}a_jK_{1,m_j}$ with $\sum ^{n}_{j=1}a_j=|V(G-L(G))|$ and $a_j\in Z^0$, where $a_n=1$ and $K_{1,m_n}=K_{1,d}$, according to the hypothesis of mathematical induction.
\end{proof}

Each star $K_{1,m_i}$ of an ice-flower system $\textbf{\textrm{K}}$ has its vertex set $V(K_{1,m_i})=\{x_i,y_{i,j}:j\in [1,m_i]\}$ and its edge set $V(K_{1,m_i})=\{x_iy_{i,j}:j\in [1,m_i]\}$ with $m_i\geq 2$, so $K_{1,m_i}$ has its own leaf set $L(K_{1,m_i})=\{y_{i,1},y_{i,2},\dots ,y_{i,m_i}\}$ and its unique non-leaf vertex $x_i$ with $\textrm{deg}_{K_{1,m_i}}(x_i)=m_i$. Suppose that $(m_{1}$, $m_{2},\dots ,m_{n})$ obeys: A sequence $\textbf{\textrm{d}}=(m_1$, $ m_2, \dots , m_n)$ with $m_{i}\leq m_{i+1}$ to be the degree sequence of a certain graph $G$ of $n$ vertices if and only if $\sum^n_{i=1}m_i$ is even and
\begin{equation}\label{eqa:c3xxxxx}
\sum^k_{i=1}m_i\leq k(k-1)+\sum ^n_{i=k+1}\min\{k,m_i\}
\end{equation}
shown by Erd\"{o}s and Gallai in 1960 \cite{Bondy-2008}.

We leaf-coincide two leaf-edges $x_iy_{i,m_{i}}$ and $x_{i+1}y_{i+1,1}$ for each pair of $K_{1,m_{i}}$ and $K_{1,m_{i+1}}$ for $i\in [1,n-1]$ into one edge $x_ix_{i+1}$ joining $K_{1,m_{i}}$ and $K_{1,m_{i+1}}$ together, such that $x_i \odot y_{i+1,1}$ and $x_{i+1}\odot y_{i,m_{i}}$. Then we get a caterpillar $T$ for its ride $P=x_1x_2\dots x_n$ with $x_i\in V(K_{1,m_{i}})$ having vertex set $V(K_{1,m_i})=\{x_i,y_{i,j}:j\in [1,m_i]\}$, next we leaf-coincide two leaf-edges $x_1y_{1,1}$ and $x_{n}y_{n,m_{n}}$ into one edge $x_1x_{n}$ with $x_1\odot y_{n,m_{n}}$ and $x_{n}\odot y_{1,1}$, the resultant graph, denoted as $T^*$, is like a \emph{haired-cycle}.
We write $T^*=\overline{\ominus}^n_{j=1}K_{1,m_{j}}$, and then do some leaf-coinciding operations on some pairs of leaf-edges of $T^*$ to get a connected graph $G$, such that $G$ has no leaf. Thereby, $G$ has a cycle $C=x_1x_2\dots x_nx_1$ containing each vertex of $G$, that is, $G$ is \emph{hamiltonian}. Especially, we denote $G$ by
\begin{equation}\label{eqa:hamiltonian}
G=\overline{\ominus} T^*=\overline{\ominus}[\overline{\ominus}^n_{j=1}K_{1,m_{j}}]=\overline{\ominus}^2|^n_{j=1}K_{1,m_{j}}.
\end{equation} Notice that there are two or more hamiltonian graphs like $G$ by leaf-coinciding some pairs of leaf-edges of $T^*$. On the other hand, each permutation $k_{1}k_{2}\cdots k_{n}$ of $m_{1}m_{2}\cdots m_{n}$ distributes us a set of hamiltonian graphs $\overline{\ominus}^2|^n_{j=1}K_{1,k_{j}}$. As known, there are $M_p(=n!)$ permutations, we have $M_p$ ice-flower systems $\textbf{\textrm{K}}_k=(K_{1,k_1},K_{1,k_2},\dots, K_{1,k_n})$ with $k\in [1,M_p]$, and each ice-flower system $\textbf{\textrm{K}}_k$ induces a set of hamiltonian graphs $\overline{\ominus}^2|^n_{j=1}K_{1,k_{j}}$, we write this set as $\overline{\ominus}^2\textbf{\textrm{K}}_k$ and $P_{ermu}(\textbf{\textrm{K}})$ to be the set of $M_p$ ice-flower systems $\textbf{\textrm{K}}_k$. Then the following set
\begin{equation}\label{eqa:hamiltonian-star-lattices}
\textbf{\textrm{L}}(\overline{\ominus}^2 P_{\textrm{ermu}}(\textbf{\textrm{K}}))=\left \{\overline{\ominus}^2|^{M_p}_{k=1}a_k\textbf{\textrm{K}}_k,~\textbf{\textrm{K}}_k\in P_{\textrm{ermu}}(\textbf{\textrm{K}})\right \}
\end{equation} with $\sum^{M_p}_{k=1}a_k=1$ a \emph{hamiltonian star-graphic lattice}. Clearly, a connected graph $H$ with $n$ vertices and degree sequence $(m_{1},m_{2},\dots ,m_{n})$ with $2\leq m_{i}\leq m_{i+1}$ is hamiltonian if and only if $H\in \textbf{\textrm{L}}(\overline{\ominus}^2 P_{\textrm{ermu}}(\textbf{\textrm{K}}))$. The process of showing this fact is just an inverse process of building up the hamiltonian star-graphic lattice $\textbf{\textrm{L}}(\overline{\ominus}^2 P_{\textrm{ermu}}(\textbf{\textrm{K}}))$, since each hamiltonian graph can be leaf-split into a haired-cycle $T^*$ obtained above according to Lemma \ref{thm:ice-flower-system-any-graph}.

We can construct graphs $G=\overline{\ominus}^{m+1}|^{M_p}_{k=1}a_k\textbf{\textrm{K}}_k$ with $\textbf{\textrm{K}}_k\in P_{\textrm{ermu}}(\textbf{\textrm{K}})$ containing $m$ edge-disjoint Hamilton cycles, also, we can express an Euler graph $H$ with degree sequence $\textbf{\textrm{d}}=(2m_1$, $ 2m_2, \dots , 2m_n)$ as $H=\overline{\ominus} |^{n}_{j=1}K_{1,2m_j}$, and so on.

\subsection{Colored star-graphic lattices} An ice-flower system $\textbf{\textrm{K}}^c$ admits a \emph{leaf-joining $W$-type coloring} $f$ if a connected graph $\overline{\ominus}|^n_{j=1}a_jK^c_{1,m_j}$ admits this $W$-type coloring $f$, where each star $K^c_{1,m_i}$ of $\textbf{\textrm{K}}^c$ admits this $W$-type coloring too. By Definition \ref{defn:colored-Leaf-splitting-coinciding-operations}, doing a colored leaf-coinciding operation to two disjoint $K^c_{1,m_i}$ and $K^c_{1,m_j}$ produces a connected graph, denoted as $K^c_{1,m_i}\overline{\ominus} K^c_{1,m_j}$. We have a \emph{$W$-type coloring star-graphic lattice}
\begin{equation}\label{eqa:leaf-joining-star-lattices}
\textbf{\textrm{L}}(\overline{\ominus} (f)\textbf{\textrm{K}}^c)=\left \{\overline{\ominus}(f)|^n_{j=1}a_jK^c_{1,m_j}:~a_j\in Z^0\right \}
\end{equation}
with $\sum ^n_{j=1}a_j\geq 1$ and the \emph{lattice base} $\textbf{\textrm{K}}^c$ admitting a leaf-joining $W$-type coloring $f$.

We are interesting on that each graph of the $W$-type coloring star-graphic lattice $\textbf{\textrm{L}}(\overline{\ominus} (f)\textbf{\textrm{K}}^c)$ admits this $W$-type coloring $f$.

\textbf{Planar star-graphic lattices.} An ice-flower system $\textbf{\textrm{K}}^{4c}$ defined in (\ref{eqa:colored-ice-flower-base}) admits a leaf-joining vertex coloring $h$ defined as: Each $K^{4c}_{1,m_j}$ of $\textbf{\textrm{K}}^{4c}$ admits a proper vertex coloring $h_j$ such that $h_j(x)\neq h_j(y)$ for any edge $xy$ of $K^{4c}_{1,m_j}$, and $h(V(K^{4c}_{1,m_j}))=[1,4]$ for $j\in [1,n]$. The leaf-coinciding operation ``$K^{4c}_{1,m_i}\overline{\ominus} K^{4c}_{1,m_j}$'' is defined in Definition \ref{defn:colored-Leaf-splitting-coinciding-operations}. Suppose that each graph $G=\overline{\ominus}_{\textrm{pla}}|^n_{j=1}a_jK^{4c}_{1,m_j}$ is connected, planar and $n_1(G)=0$. We obtain a \emph{planar star-graphic lattice}
\begin{equation}\label{eqa:c3xxxxx}
\textbf{\textrm{L}}(\overline{\ominus}_{\textrm{pla}} \textbf{\textrm{K}}^{4c})=\{\overline{\ominus}_{\textrm{pla}}|^n_{j=1}a_jK^{4c}_{1,m_j},~K^{4c}_{1,m_j}\in \textbf{\textrm{K}}^{4c}\}
\end{equation} with $\sum^n_{j=1}a_j\geq 1$, and $\textbf{\textrm{K}}^{4c}$ is called a \emph{4-color lattice base}. In general, a graph $\overline{\ominus}|^n_{j=1}a_jK^{4c}_{1,m_j}$ is 4-colorable, but may be not planar.

As example, the star-decomposition (star-coincidence) of a 4-colorable planar graph $G=\overline{\ominus} (8K^{4c}_{1,5}, 2K^{4c}_{1,4})$ $\in \textbf{\textrm{L}}(\overline{\ominus}_{\textrm{pla}} \textbf{\textrm{K}}^{4c})$ is shown in Fig.\ref{fig:4-color-1} and Fig.\ref{fig:4-color-3}, and this 4-colorable planar graph $G=\overline{\ominus}^2(8K^{4c}_{1,5}, 2K^{4c}_{1,4})$ is hamiltonian.

\begin{figure}[h]
\centering
\includegraphics[width=16.2cm]{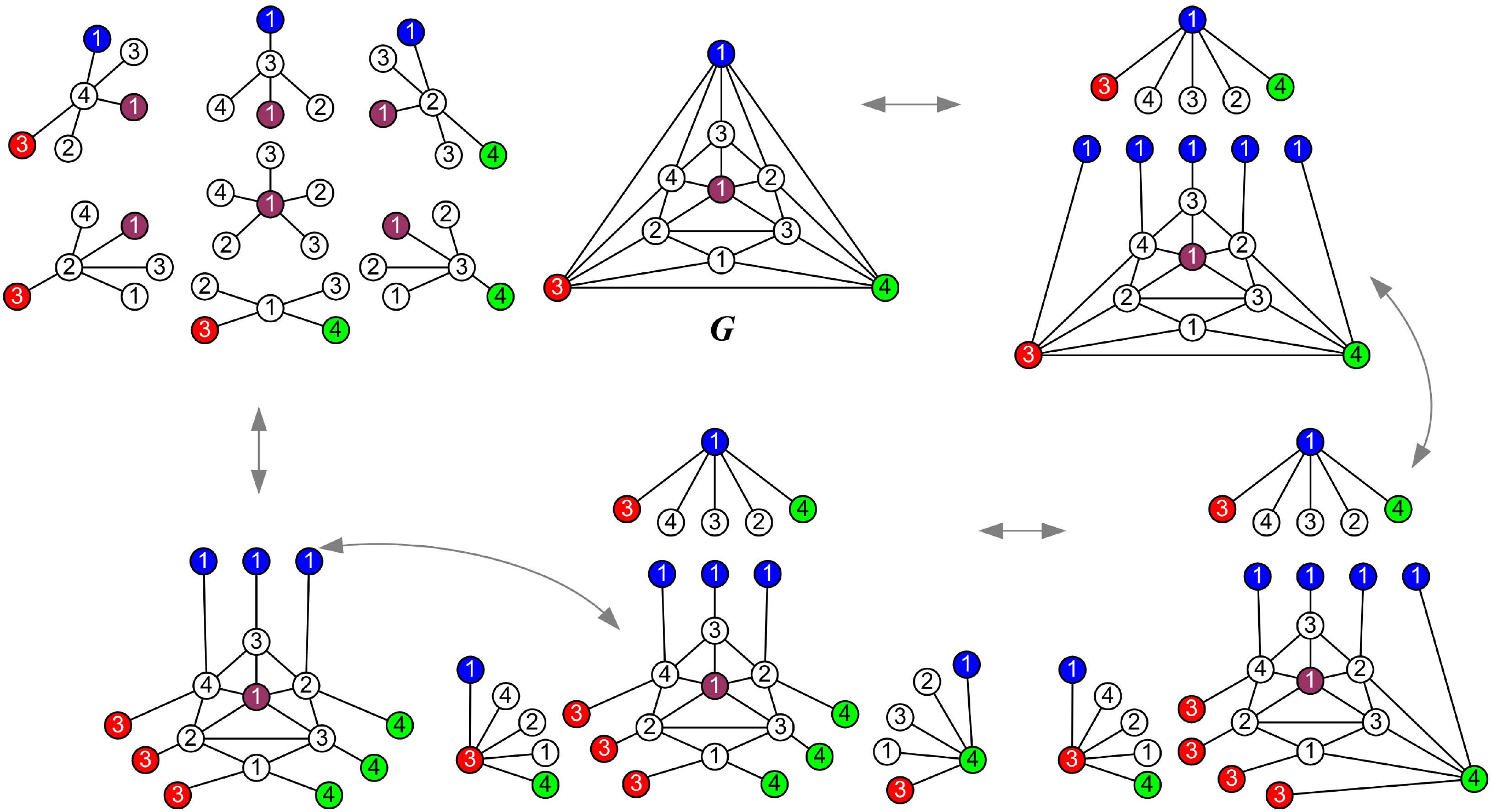}\\
\caption{\label{fig:4-color-1} {\small A scheme of decomposition of a planar graph $G$.}}
\end{figure}

\begin{figure}[h]
\centering
\includegraphics[width=16.2cm]{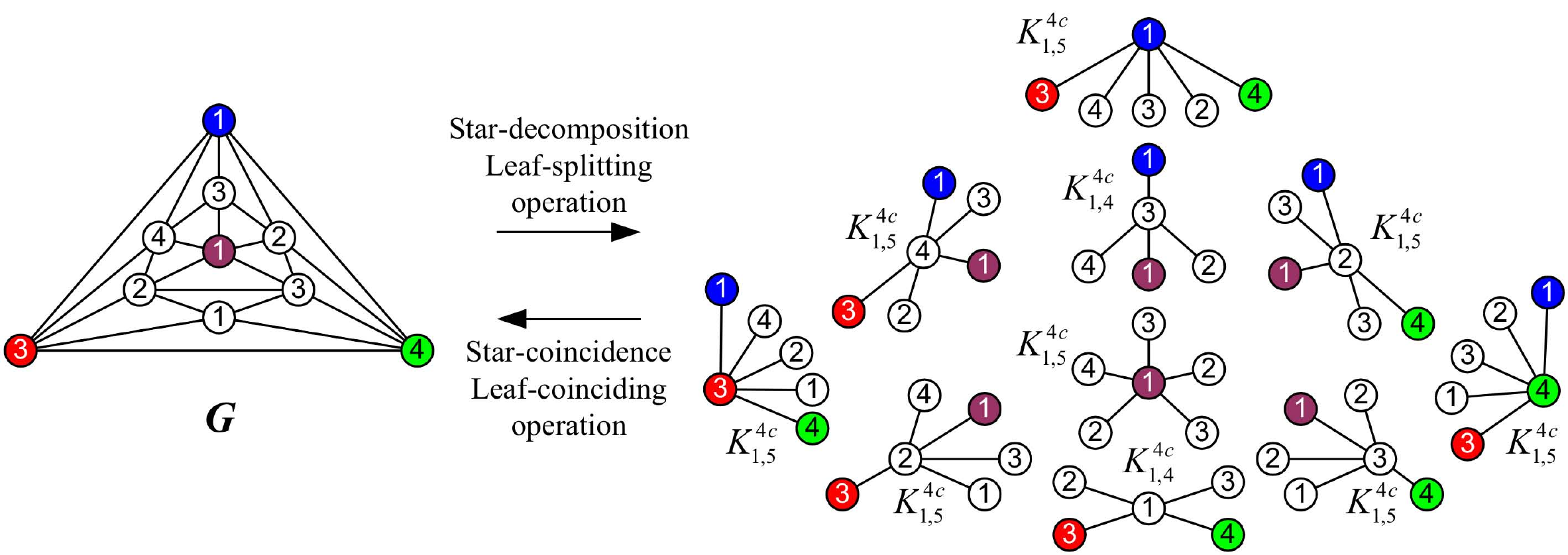}\\
\caption{\label{fig:4-color-3} {\small $G=\overline{\ominus} (8K^{4c}_{1,5}, 2K^{4c}_{1,4})$.}}
\end{figure}

\textbf{Spanning star-graphic lattices.} A connected graph $T$ is a tree if and only if $n_1(T)=2+\Sigma_{d\geq 3}(d-2)n_d(T)$ (\cite{Yao-Zhang-Yao-2007, Yao-Zhang-Wang-Sinica-2010}).

We define a leaf-joining vertex coloring $g$ for an ice-flower system $\textbf{\textrm{K}}^c$ defined in (\ref{eqa:colored-ice-flower-base}) as: Each $K^c_{1,m_j}$ of $\textbf{\textrm{K}}^c$ admits a proper vertex coloring $g_j$ such that $g_j(x)\neq g_j(y)$ for any pair of vertices $x,y$ of $K^c_{1,m_j}$, and each leaf-coinciding graph $T=\overline{\ominus}|^n_{j=1}K^c_{1,m_j}$ is connected based on the leaf-coinciding operation ``$K^c_{1,m_i}\overline{\ominus} K^c_{1,m_j}$'' defined in Definition \ref{defn:colored-Leaf-splitting-coinciding-operations}, such that

(1) $n_1(T)=2+\Sigma_{d\geq 3}(d-2)n_d(T)$ holds true;

(2) $T$ admits a proper vertex coloring $f=\overline{\ominus}|^n_{j=1}g_j$ with $f(u)\neq f(w)$ for any pair of vertices $u,w$ of $T$.

We get a set $\textbf{\textrm{L}}(\overline{\ominus} (m,g)\textbf{\textrm{K}}^c)$ containing the above leaf-coinciding graphs $T=\overline{\ominus}|^n_{j=1}K^c_{1,m_j}$ if $|V(T)|=m$. Since each graph $T\in\textbf{\textrm{L}}(\overline{\ominus} (m,g)\textbf{\textrm{K}}^c)$ is a tree, and Cayley's formula $\tau(K_m)=m^{m-2}$ in graph theory (Ref. \cite{Bondy-2008}) tells us the number of elements of $\textbf{\textrm{L}}(\overline{\ominus} (n)\textbf{\textrm{K}}^c)$ to be equal to $m^{m-2}$. We call this set $\textbf{\textrm{L}}(\overline{\ominus} (m,g)\textbf{\textrm{K}}^c)=\{\overline{\ominus}|^n_{j=1}K^c_{1,m_j},~K^c_{1,m_j}\in \textbf{\textrm{K}}^c\}$  a \emph{spanning star-graphic lattice}.

\vskip 0.2cm

\textbf{Uniform ice-flower systems.}  The authors in \cite{Wang-Su-Yao-2019, Wang-Yao-Star-type-Lattices-2020, Yao-Wang-Su-Ma-Wang-Sun-ITNEC-2020} have introduced some articular proper total colorings, in which, for a graph $G$, a proper total coloring $f:V(G)\cup E(G)\rightarrow [1,M]$ induces a parameter
{\small
\begin{equation}\label{eqa:felicitous-difference-total-coloring}
B_{fdt}(G,f,M)=\max_{uv \in E(G)}\{c_f(uv)\}-\min_{uv \in E(G)}\{c_f(uv)\}
\end{equation}
}where $c_f(uv)=|f(u)+f(v)-f(uv)|$. If $B_{fdt}(G,f,M)=0$, we call $f$ a \emph{felicitous-difference proper total coloring} of $G$, the number $\chi''_{fdt}(G)=\min_f \{M:~B_{fdt}(G,f,M)=0\}$ over all felicitous-difference proper total colorings of $G$ is called \emph{felicitous-difference total chromatic number}. See some felicitous-difference proper total colorings shown in Fig.\ref{fig:felicitous-di-constants}.

\begin{figure}[h]
\centering
\includegraphics[width=16cm]{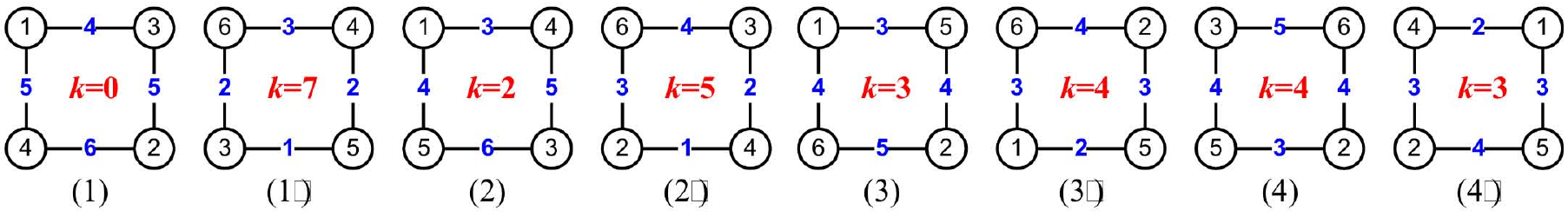}\\
\caption{\label{fig:felicitous-di-constants} {\small Various constants $k=|f(u)+f(v)-f(uv)|$ of felicitous-difference proper total colorings.}}
\end{figure}

There are four uniform ice-flower systems $(F_kD_{1,6})^6_{k=1}$, $(F_sD_{1,6})^{12}_{s=7}$, $(F_kK_{1,6})^{6}_{k=1}$ and $(F_sF_{1,6})^{12}_{s=7}$ shown in Fig.\ref{fig:felicitous-di-1}, where $(F_kD_{1,6})^6_{k=1}$ and $(F_kK_{1,6})^{6}_{k=1}$ are mutually \emph{dual}, and $(F_sD_{1,6})^{12}_{s=7}$ and $(F_sF_{1,6})^{12}_{s=7}$ are \emph{dual} to each other.

\begin{figure}[h]
\centering
\includegraphics[width=14cm]{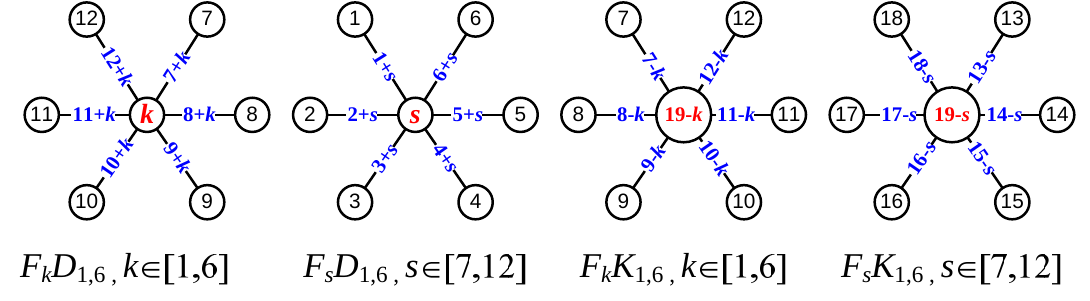}\\
\caption{\label{fig:felicitous-di-1} {\small Four uniform ice-flower systems.}}
\end{figure}

If a graph $H$ has its vertex degree $\textrm{deg}_H(x)=1$ or $\textrm{deg}_H(x)=\Delta(H)$ for each vertex $x\in V(H)$, then we call $H$ a $\Delta$-saturated graph. See two $\Delta$-saturated graphs shown in Fig.\ref{fig:small-2}, where (a) admits a \emph{graph homomorphism} to (b). By a uniform  ice-flower system $(F_kD_{1,m})^m_{k=1}$ (each $F_kD_{1,m}$ is isomorphic to a star $K_{1,m}$, see an example shown in Fig.\ref{fig:felicitous-di-1}) and another uniform ice-flower system $(SF_{1,m}D_k)^m_{k=1}$ (each $SF_{1,m}D_k$ is isomorphic to a star $K_{1,m}$, see an example shown in Fig.\ref{fig:small-1}), we can show
\begin{thm} \label{thm:felicitous-difference-results}
(1) $\chi''_{fdt}(K_{m,m})=3m$.

(2) Each bipartite graph $G$ holds $\chi''_{fdt}(G)\leq 3\Delta(G)$.

(3) Each tree $T$ holds $\chi''_{fdt}(T)\leq 1+2\Delta(T)$.
\end{thm}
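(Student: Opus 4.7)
The plan is to obtain the three bounds from a single ice-flower scheme: expand the target graph along Lemma \ref{thm:ice-flower-system-any-graph}, colour each star constituent using one of the pre-designed felicitous-difference proper total colourings from the uniform ice-flower systems $(F_kD_{1,m})^{m}_{k=1}$ and $(SF_{1,m}D_k)^{m}_{k=1}$ of Fig.\ref{fig:felicitous-di-1}, then reassemble via colored leaf-coinciding operations (Definition \ref{defn:colored-Leaf-splitting-coinciding-operations}). Since a colored leaf-coinciding operation identifies only endpoints and edges that already match in colour, the quantity $c_f(uv)=|f(u)+f(v)-f(uv)|$ is inherited on every surviving edge, and propriety of the total colouring is automatic by the definition.

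For (3), I would start with the explicit base colouring of the star $K_{1,m}$ given by $f(c)=1$, $f(v_i)=2i$ and $f(cv_i)=2i+1$ for $i\in[1,m]$, which realises $c_f=0$ on every edge using colours only from $[1,1+2m]$ and is essentially the colouring underlying $(F_kD_{1,m})^{m}_{k=1}$. For a tree $T$ with $\Delta=\Delta(T)$, Lemma \ref{thm:ice-flower-system-any-graph} writes $T=\overline{\ominus}|^{n}_{j=1}K_{1,m_j}$ with $m_j\le\Delta$, one star per internal vertex. I would then induct on $n$: at each step, the next star is chosen from the uniform family $(F_kD_{1,m_j})^{m_j}_{k=1}$ so that the pair of endpoints and the pair of edges to be coincided already agree in colour; the induction hypothesis and Definition \ref{defn:colored-Leaf-splitting-coinciding-operations} then yield a felicitous-difference proper total colouring of the enlarged tree, still using colours from $[1,1+2\Delta]$, which is the target bound.

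For (2), essentially the same scheme applies to a bipartite $G=(X,Y)$, but the presence of cycles forces additional colored leaf-coincidences to close cycles, and here the dual pair $(F_kD_{1,m})^{m}_{k=1}$ versus $(SF_{1,m}D_k)^{m}_{k=1}$ is used: the first family at $X$-centres and the second at $Y$-centres, so that at every edge identification the colour-matching condition of Definition \ref{defn:colored-Leaf-splitting-coinciding-operations} is met. Orchestrating the order and signs of the coincidences by a proper König edge $\Delta$-colouring of $G$, the vertex and edge colours of $f$ can be packed into three consecutive blocks of length $\Delta$ inside $[1,3\Delta]$, giving $\chi''_{fdt}(G)\le 3\Delta$. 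The technical heart of this step is verifying that the common value of $c_f$ really remains constant across all edges of $G$ after all the coincidences, which is forced by the dual nature of the two uniform systems.

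For (1), the upper bound $\chi''_{fdt}(K_{m,m})\le 3m$ is the $\Delta=m$ case of (2); concretely, it is realised by $f(x_i)=i$, $f(y_j)=m+j$ and $f(x_iy_j)=i+m+j$ with $k=0$. The matching lower bound is the main obstacle. I would prove it by contradiction: assume a felicitous-difference proper total colouring $f$ of $K_{m,m}$ with constant $k$ and maximum colour $M\le 3m-1$, so that every edge satisfies $f(xy)=f(x)+f(y)+\varepsilon_{xy}k$ for some $\varepsilon_{xy}\in\{+1,-1\}$. At a fixed $x\in X$, propriety forces the $m$ values $\{f(y_i)+\varepsilon_{xy_i}k\}_{i=1}^{m}$ to be distinct and, together with $f(x)$, to occupy $m+1$ distinct positions of $[1,M]$; the symmetric condition holds at each $y\in Y$. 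Comparing the extremal edges (those carrying the largest and smallest colour values in the star centred at each $X$-vertex) against the two-valued structure of $\varepsilon$, a careful range argument should show that the length of $[1,M]$ must be at least $3m$. The delicate step will be reconciling the sign choices $\varepsilon_{xy}$ simultaneously across the whole $m\times m$ edge matrix; I expect this to be handled by interpreting the allowed sign pattern through the dual uniform ice-flower systems of Fig.\ref{fig:felicitous-di-1}, which essentially enumerate the extremal colourings of $K_{m,m}$.
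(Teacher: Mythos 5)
Your overall strategy---decompose the target graph into stars via Lemma \ref{thm:ice-flower-system-any-graph}, pre-color the stars by the uniform ice-flower systems $(F_kD_{1,m})^m_{k=1}$ and $(SF_{1,m}D_k)^m_{k=1}$, and reassemble with colored leaf-coinciding operations---is exactly the route the paper indicates (the paper offers no written proof beyond pointing at these systems in Fig.\ref{fig:felicitous-di-1} and Fig.\ref{fig:small-1}). The explicit colorings you do give are sound: $f(c)=1$, $f(v_i)=2i$, $f(cv_i)=2i+1$ is a felicitous-difference proper total coloring of $K_{1,m}$ with constant $0$ inside $[1,1+2m]$, and $f(x_i)=i$, $f(y_j)=m+j$, $f(x_iy_j)=i+m+j$ correctly yields $\chi''_{fdt}(K_{m,m})\le 3m$. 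Your observation that $c_f$ and propriety survive a colored leaf-coinciding operation is also correct.

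Two genuine gaps remain, however. First, the equality in (1) requires the lower bound $\chi''_{fdt}(K_{m,m})\ge 3m$, and for this you offer only a plan (``a careful range argument should show\dots'', ``I expect this to be handled by\dots''); the simultaneous reconciliation of the signs $\varepsilon_{xy}$ over the $m\times m$ edge set is precisely the hard combinatorial content, and nothing in the sketch pins it down. Since (1) is the only assertion of the theorem that is an equality rather than an upper bound, its proof is essentially missing. Second, in (2) and (3) the inductive reassembly needs, at every internal vertex, a star from the family whose center color, leaf colors and edge colors simultaneously match the colors already placed at its neighbors, with all stars sharing one common felicitous constant and all colors confined to $[1,1+2\Delta]$ (resp.\ $[1,3\Delta]$). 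You assert that the uniform families supply this but never construct them; note that your own explicit base star cannot even be leaf-coincided with a copy of itself, since its center carries the odd color $1$ while every leaf carries an even color, so the matching condition $\theta(u_1)=\theta(v_2)$ of Definition \ref{defn:colored-Leaf-splitting-coinciding-operations} fails. A complete argument must exhibit the compatibly colored copies (one for each admissible center color) and verify that the required choice is always available at each step of the induction, especially in the bipartite case where cycles must be closed.
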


\begin{figure}[h]
\centering
\includegraphics[width=16cm]{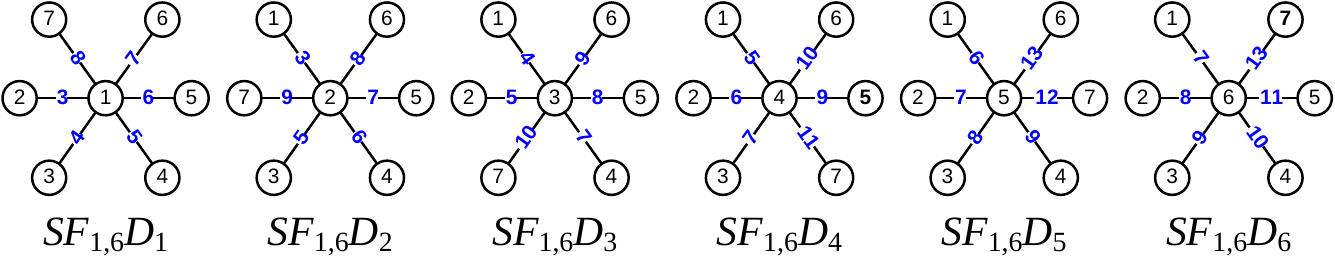}\\
\caption{\label{fig:small-1} {\small A strongly colored uniform ice-flower system $(SF_{1,6}D_k)^6_{k=1}$.}}
\end{figure}

\begin{figure}[h]
\centering
\includegraphics[width=14cm]{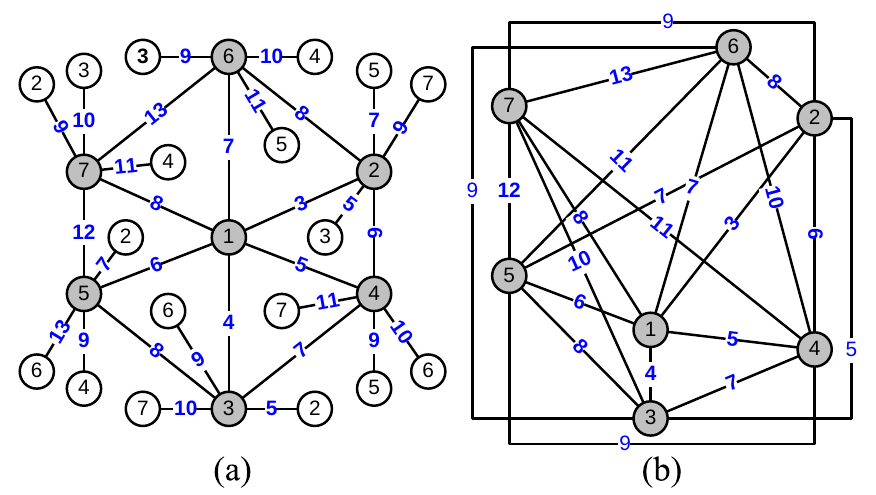}\\
\caption{\label{fig:small-2} {\small Two $\Delta$-saturated graphs obtained from the uniform ice-flower system $(SF_{1,6}D_k)^6_{k=1}$ shown in Fig.\ref{fig:small-1}.}}
\end{figure}

\section{Conclusion}

We have defined the uniform ice-flower systems $\textbf{\textrm{K}}$, $\textbf{\textrm{K}}^c$, $(F_kD_{1,m})^m_{k=1}$, $(SF_{1,m}D_k)^m_{k=1}$, \emph{etc.}, and defined uncolored or colored leaf-splitting and leaf-coinciding operations. These ice-flower systems and the leaf-coinciding operation help us to construct the spanning star-graphic lattice $\textbf{\textrm{L}}(\overline{\ominus} (m,g)\textbf{\textrm{K}}^c)$, the planar star-graphic lattice $\textbf{\textrm{L}}(\overline{\ominus}_{\textrm{pla}} \textbf{\textrm{K}}^{4c})$ and hamiltonian star-graphic lattice $\textbf{\textrm{L}}(\overline{\ominus}^2 P_{\textrm{ermu}}(\textbf{\textrm{K}}))$. More importantly, we have computed the number $m^{m-2}$ of elements of a star-graphic lattice $\textbf{\textrm{L}}(\overline{\ominus} (n)\textbf{\textrm{K}}^c)$.

For further investigation, we have the following problems: (1)  \emph{Determine} uniformly colored uniform ice-flower systems with felicitous-difference proper total colorings for closing $\chi''_{fdt}(G)$. (2) \emph{Find} solutions for Decomposition Number String Problem. (3) \emph{Build} more connections between traditional lattices and star-graphic lattices. (4) \emph{Compute} the number of elements of a star-graphic lattice.

\section*{Acknowledgment}

The author, \emph{Bing Yao}, was supported by the National Natural Science Foundation of China under grant No. 61163054, No. 61363060 and No. 61662066. The author, \emph{Hongyu Wang}, thanks gratefully the National Natural Science Foundation of China under grants No. 61902005, and China Postdoctoral Science Foundation Grants No. 2019T120020 and No. 2018M641087.



\begin{thebibliography}{10}
\bibitem{Bondy-2008} J. A. Bondy, U. S. R. Murty. Graph Theory. Springer London, 2008.
\bibitem{Gallian2019} Joseph A. Gallian. A Dynamic Survey of Graph Labeling. The electronic journal of combinatorics, \# DS6. (535 pages, 2832 reference papers, over 200 graph labellings, Twenty-second edition, December 15, 2019)
\bibitem{Bernstein-Buchmann-dahmen-quantum-2009}Daniel J. Bernstein, Johannes Buchmann, Erik Dahmen. Post-Quantum Cryptography. Springer-Verlag Berlin Heidelberg, 2009. ISBN: 978-3-540-88701-0, e-ISBN: 978-3-540-88702-7. (245 pages, 110 reference papers)
\bibitem{Wang-Xiao-Yun-Liu-2014}Wang Xiao-Yun, Liu Ming-Jie. Survey of Lattice-based Cryptography. Journal of Cryptologic Research (Chinses), 2014, 1(1):13-27.


\bibitem{Wang-Yao-Star-type-Lattices-2020}Hongyu Wang, Bing Yao. Star-type Graphic Lattices Based On Total-Graceful Proper Total Coloring. submitted 2020.
\bibitem{Wang-Su-Yao-2019}Hongyu Wang, Jing Su, Bing Yao. On The Specially Total Coloring Of Graphs. submitted 2019.



\bibitem{Yao-Wang-Su-Sun-ITOEC2020}Bing Yao, Hongyu Wang, Jing Su, Hui Sun. Graphic Lattices For Constructing High-Quality Networks. submitted to ITOEC 2020.
\bibitem{Yao-Zhang-Yao-2007}Bing Yao, Zhong-fu Zhang and Ming Yao, A Class of Spanning Trees. International Journal of Computer, Mathematical Sciences and Applications, 1. 2-4, April-December 2007, 191-198.
\bibitem{Yao-Zhang-Wang-Sinica-2010}Bing Yao, Zhong-fu Zhang and Jian-fang Wang. Some results on spanning trees. Acta Mathematicae Applicatae Sinica, English Series, 2010, 26(4).607-616. DOI:10.1007/s10255-010-0011-4
\bibitem{Yao-Wang-Su-Ma-Wang-Sun-ITNEC-2020}Bing Yao, Hongyu Wang, Jing Su, Fei Ma, Xiaomin Wang, Hui Sun. Especial Total Colorings Towards Multiple Authentications In Network Encryption. 2020 IEEE 4th Information Technology, Information Technology, Information Networking, Networking, Electronic and Automation Control Conference (ITNEC 2020).
\end{thebibliography}
\end{document}